\theoremstyle{plain}
\theoremstyle{plain}
\newtheorem{example}{\bf Example}
\newtheorem{lemma}{\bf Lemma}
\newtheorem{proposition}{\bf Proposition}
\newtheorem{proposition*}{\bf Proposition}
\newtheorem{remark}{\bf Remark}
\newtheorem{theorem}{\bf Theorem}
\newtheorem*{theorem.a}{\bf Theorem}
\numberwithin{equation}{section}
\begin{document}


\title[A nongradient Ricci almost soliton]
{A nongradient Ricci almost soliton}

\author{Antonio Airton Freitas Filho}
\address{Departamento de Matem\'atica, Universidade Federal do Amazonas, Av. Rodrigo Oct\'avio, 6200, 69080-900 Manaus, Amazonas, Brazil}
\email{aafreitasfilho@ufam.edu.br}
\urladdr{http://www.ufam.edu.br}



\begin{abstract}
In this note, we present a construction method and an explicit example of nongradient (expanding or indefinite) Ricci almost soliton in a warped product. Moreover, we show a rigidity result for the Gaussian soliton.  
\end{abstract}
\maketitle

\section{Introduction}

A well-known generalization of surfaces of revolution are \textit{warped products}; this concept introduced by Bishop and O'Neill in \cite{bis-one}: given two Riemannian manifolds $(B^{k},g_{B}),$ $(F^{m},g_{F})$ and a positive smooth function $w$ in $B,$ define on the product $M^{k+m}=B^{k}\times F^{m}$ the \textit{warped metric}
\begin{align*}
g=\pi^{\ast}g_{B}+(w\circ\pi)^{2}\sigma^{\ast} g_{F},
\end{align*}
where $\pi:M\to B$ and $\sigma:M\to F$ are canonical projection maps. Looking at the warped product $B\times_{w}F:=
(M^{k+m},g),$ we say that $B$ the \textit{base}, $F$ is the \textit{fiber} and $w$ is the \textit{warping function}. When $w$ is constant, we have an usual Riemannian product.

We known that warped products have certain geometric properties for their base and their fiber, for example, the base is a totally geodesic submanifold while the fiber is a umbilical submanifold.

On the study of Einstein metric warped products we suggest the references \cite{BBR,besse,case2,Case,hepeterwylie,kim}. We highlight, for example, the following result:
A warped product $B^{k}\times_{w}F^{m}$ is a Einstein metric, with Einstein constant $c,$ if and only if $(B^{k},g_{B},w)$ is a $m$-quasi-Einstein metric and $(F^{m},g_{F})$ is an Einstein metric, with Einstein constant $\mu,$ where 
\begin{align*}
\mu=c w^{2}+w\Delta_{B} w+(m-1)|\nabla_{B} w|^{2}.
\end{align*}  

The Ricci solitons generalize Einstein metrics and correspond to the self-similar solutions of Ricci flow and often arise as
limits of dilations of singularities in the Ricci flow; see Hamilton \cite{HamiltonRF}. The Ricci almost solitons were introduced as a generalization of Ricci solitons in \cite{Pigola}, having a special family related to the Ricci-Bourguignon flow, see Catino et al. \cite{Catino1} or Catino and Mazzieri \cite{Catino2}.

We say that a complete Riemannian manifold $(M^{n},g)$ is a \textit{Ricci almost soliton} (with \textit{soliton function} $\lambda$), if there exist a smooth vector field $V$ and a smooth function $\lambda$ satisfying
\begin{align*}
Ric + \frac{1}{2}\mathcal{L}_{V}g = \lambda g,    
\end{align*}
where $Ric$ and $\mathcal{L}$ stand, respectively, for the Ricci tensor and the Lie derivative. We shall refer to this equation as the fundamental equation of a Ricci almost soliton $(M^{n},g,V,\lambda).$ It will be called \textit{expanding}, \textit{steady} or \textit{shrinking}, respectively, if $\lambda<0,$ $\lambda=0$ or $\lambda>0.$ Otherwise, it will be called \textit{indefinite}. 

When the vector field $V=\nabla f,$ for some smooth function $f,$ the $(M^{n},g,\nabla f,\lambda)$ will be called a \textit{gradient Ricci almost soliton}. 

Explicit examples of non-Einstein gradient Ricci almost solitons with Codazzi Ricci tensor were presented in \cite{calvi.etal}. Also, some examples of gradient Ricci (almost) solitons warped product were constructed in \cite{RSWP,ARSWP,Gas-Kro,Pigola,romildo,Ivey}. 

For a detailed study of the potential function of gradient Ricci almost soliton warped product we recommend \cite{B-Tenen}.

In \cite{BaDa} $(Sol^{3},V,-2)$ was presented as a nongradient expanding Ricci soliton where $Sol^{3}=B^{2}\times_{e^{-t}}\mathbb{R},$ with $(B^{2},g_{B})=\mathbb{R}\times_{e^{t}}\mathbb{R}$ and $V_{(t,x_{1},x_{2})}=(-2,0,-4x_{2}).$


Our contribution is to explicit example of nongradient (expanding or indefinite) Ricci almost soliton warped product $\mathbb{R}\times_{w}\mathbb{R}^{m}$ (see Example \ref{main2}).

\begin{theorem}\label{main1}
Let $\mathbb{R}\times_{w}\mathbb{R}^{m},$ $m\geqslant 2,$ be a warped product and let 
\begin{align*}
V_{(t,x)}=\left(V_{0}(t,x),V_{1}(t,x),\ldots,V_{m}(t,x)\right)    
\end{align*}
be a smooth vector field. Then $(\mathbb{R}\times_{w}\mathbb{R}^{m},V,\lambda)$ is a Ricci almost soliton if, and only if, 
\begin{align}\label{T1}
V_{(t,x)}=(V_{0}(t),bx_{1}+c_{1},\ldots,bx_{m}+c_{m}),    
\end{align}
\begin{align}\label{T3}
\lambda=-m\dfrac{w^{\prime\prime}}{w}+V_{0}^{\prime},    
\end{align}
\begin{align}\label{T2}
-(m-1)\left(\dfrac{w^{\prime}}{w}\right)^{\prime} - V_{0}\dfrac{w^{\prime}}{w} + V_{0}^{\prime} = b, 
\end{align}
for some constants $b,c_{1},\ldots,c_{m},$ with $b^{2}+c_{1}^{2}+\ldots+c_{m}^{2}\neq0.$ 

In particular, if $w(t)$ is nonconstant smooth function, then $V$ is nongradient smooth vector field.
\end{theorem}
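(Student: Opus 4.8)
The plan is to write everything in the natural product coordinates, turn the fundamental equation into a system of scalar equations, solve it, and read off \eqref{T1}--\eqref{T2}; the final assertion will then follow from a closedness obstruction. Use coordinates $(t,x_1,\dots,x_m)$, so that $g=dt^2+w(t)^2\sum_i dx_i^2$ and $V=V_0\partial_t+\sum_iV_i\partial_{x_i}$. Since the fibre $\mathbb R^m$ is flat, the standard warped-product Ricci identities (those underlying the Einstein characterization recalled above, see \cite{besse}) give $\mathrm{Ric}(\partial_t,\partial_t)=-m w''/w$, $\mathrm{Ric}(\partial_{x_i},\partial_{x_j})=-\bigl(w w''+(m-1)(w')^2\bigr)\delta_{ij}$ and $\mathrm{Ric}(\partial_t,\partial_{x_i})=0$, while $(\mathcal L_Vg)_{\mu\nu}=V^\alpha\partial_\alpha g_{\mu\nu}+g_{\alpha\nu}\partial_\mu V^\alpha+g_{\mu\alpha}\partial_\nu V^\alpha$ yields $(\mathcal L_Vg)(\partial_t,\partial_t)=2\partial_tV_0$, $(\mathcal L_Vg)(\partial_t,\partial_{x_i})=w^2\partial_tV_i+\partial_{x_i}V_0$ and $(\mathcal L_Vg)(\partial_{x_i},\partial_{x_j})=2ww'V_0\,\delta_{ij}+w^2(\partial_{x_i}V_j+\partial_{x_j}V_i)$. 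Inserting these into $\mathrm{Ric}+\tfrac12\mathcal L_Vg=\lambda g$ and splitting into the $tt$, $tx_i$, $x_ix_j$ ($i\ne j$) and $x_ix_i$ components turns the fundamental equation into the equivalent system
\begin{gather*}
\mathrm{(I)}\ \ \partial_tV_0-m\tfrac{w''}{w}=\lambda, \qquad \mathrm{(II)}\ \ w^2\partial_tV_i+\partial_{x_i}V_0=0,\\
\mathrm{(III)}\ \ \partial_{x_i}V_j+\partial_{x_j}V_i=0\ \ (i\ne j),\\
\mathrm{(IV)}\ \ w^2\partial_{x_i}V_i+ww'V_0-ww''-(m-1)(w')^2=\lambda w^2 .
\end{gather*}

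For ``$\Leftarrow$'', substitute $V=(V_0(t),bx_1+c_1,\dots,bx_m+c_m)$, $\lambda=-m w''/w+V_0'$ and \eqref{T2} into (I)--(IV): since $\partial_tV_i=0$, $\partial_{x_i}V_j=0$ for $i\ne j$ and $\partial_{x_i}V_i=b$, equations (II) and (III) hold trivially, (I) is the definition of $\lambda$, and (IV) reduces, after dividing by $w^2$, to $V_0'-(w'/w)V_0-(m-1)(w'/w)'=b$, which is exactly \eqref{T2}. For ``$\Rightarrow$'', note first that by (IV) the quantity $\partial_{x_i}V_i$ is independent of $i$; writing $h$ for its common value, (III) then says that for each fixed $t$ the fibre field $(V_1,\dots,V_m)$ satisfies $\partial_{x_i}V_j+\partial_{x_j}V_i=2h\,\delta_{ij}$, i.e.\ it is a conformal Killing field of flat $\mathbb R^m$. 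Differentiating (III) and (IV) and eliminating $\partial_{x_i}V_0$ through (II), one aims to show that each $V_i$ is affine in $x$ with the same leading coefficient; granting this, (II) forces $\partial_tV_i=0$ and $\partial_{x_i}V_0=0$, so $V_0=V_0(t)$ and $V_i=bx_i+c_i$ for constants $b,c_1,\dots,c_m$, and substituting back gives \eqref{T3} from (I) and \eqref{T2} from (IV); the nontriviality $b^2+\sum c_i^2\ne0$ is what rules out the purely axial case $V=V_0(t)\partial_t$. I expect this last reduction to be the main obstacle: the conformal Killing equation a priori also permits skew-symmetric (rotational) and special-conformal contributions to $(V_1,\dots,V_m)$, and excluding these against (I), (II) and (IV) is where the real work lies---and it is especially delicate when $m=2$, where conformal fields of the plane are abundant; the rest is bookkeeping.

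For the final assertion, suppose $w$ is nonconstant and $V=\nabla f$ for some $f$. Then $V^\flat=df$ is closed; with $V$ as in \eqref{T1} one has $V^\flat=V_0(t)\,dt+w(t)^2\sum_i(bx_i+c_i)\,dx_i$, whose exterior derivative has $dt\wedge dx_i$-coefficient $2ww'(bx_i+c_i)$. Closedness forces $2ww'(bx_i+c_i)\equiv0$ for every $i$; since $w>0$ and $w'$ is not identically zero, on the nonempty open set $\{w'\ne0\}$ this gives $bx_i+c_i\equiv0$, hence (evaluating at $x=0$ and then at $x_i=1$) $b=c_1=\dots=c_m=0$, contradicting $b^2+c_1^2+\dots+c_m^2\ne0$. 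Therefore $V$ is nongradient.
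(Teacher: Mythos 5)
Your setup and your verification of the ``if'' direction are fine and coincide with the paper's: your system (I)--(IV) is exactly the set of component equations \eqref{eqA}--\eqref{eqD} obtained there from Lemma \ref{Lem.1}, and your converse is the same direct substitution. The problem is that you have not actually proved the ``only if'' direction: the passage from (I)--(IV) to the specific form $V_{0}=V_{0}(t)$, $V_{i}=bx_{i}+c_{i}$ with \emph{constants} $b,c_{1},\dots,c_{m}$ is the entire content of the theorem, and you explicitly defer it (``one aims to show\dots'', ``granting this\dots''). The paper's route is: from the off-diagonal fibre components it concludes $\partial V_{i}/\partial x_{j}=0$ for $i\neq j$, so $V_{i}=V_{i}(t,x_{i})$; equating the diagonal fibre equations \eqref{eqB} for two indices gives $\partial V_{i}/\partial x_{i}=\partial V_{j}/\partial x_{j}=:b(t)$, hence $V_{i}=b(t)x_{i}+c_{i}(t)$; then \eqref{eqC} is integrated in $x_{i}$ to give
\begin{align*}
V_{0} = -w^{2}\Bigl(b^{\prime}(t)\tfrac{x_{i}^{2}}{2}+c_{i}^{\prime}(t)x_{i}\Bigr)+a_{i}(t,\widehat{x_{i}}),
\end{align*}
and comparing these representations for $i\neq j$ is what is invoked to force $b^{\prime}=c_{i}^{\prime}=0$ and $V_{0}=V_{0}(t)$. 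None of this appears in your write-up, so the forward implication is asserted rather than proved.

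Two further points. First, your fallback claim that, once each $V_{i}$ is affine in $x$ with common leading coefficient, ``(II) forces $\partial_{t}V_{i}=0$ and $\partial_{x_{i}}V_{0}=0$'' is not justified and, as stated, would fail: (II) only couples the two quantities, and the system $\partial_{x_{i}}V_{0}=-w^{2}(b^{\prime}x_{i}+c_{i}^{\prime})$ is integrable with $V_{0}$ quadratic in $x$ and $b,c_{i}$ still $t$-dependent; eliminating that possibility requires an additional argument (the cross-index comparison above, or a further confrontation with (I) and (IV)), not (II) alone. Second, your reading of the off-diagonal fibre equation as the symmetrized condition $\partial_{i}V_{j}+\partial_{j}V_{i}=0$ is the correct one, and your observation that this only makes $(V_{1},\dots,V_{m})$ a conformal field of the flat fibre --- so that rotational and special-conformal parts must still be excluded, which is delicate especially for $m=2$ --- is an accurate diagnosis of where the difficulty lies; the paper's Lemma \ref{Lem.1} instead reads off the unsymmetrized coefficient and records \eqref{eqD} as $\partial V_{i}/\partial x_{j}=0$, thereby never confronting this issue. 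But identifying the obstacle is not the same as overcoming it: as it stands your proposal proves the converse and the nongradient statement (that last argument is essentially identical to the paper's computation of $dV^{\flat}$) while leaving the main implication open.
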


According to the previous theorem, the construction of a Ricci almost soliton on the warped product $\mathbb{R}\times_{w} \mathbb{R}^{m}$ is reduced to finding solutions $V_{0}$ and $w$ to equation \eqref{T2}. 

\begin{remark}[Gaussian soliton rigidity]
With the our theorem it is possible to construct the Gaussian soliton by taking $w(t)=c,$ for some positive constant $c.$ So, by taking $w(t)=c>0,$ we have by equations \eqref{T3} and \eqref{T2} that
\begin{align*}
\lambda=b \ \ \mbox{and} \ \ V_{0}(t)=bt+c_{0},   
\end{align*}
for some constant $c_{0}.$ Hence, $(\mathbb{R}^{1+m},g_{\circ},V,b)$ is a Gaussian soliton, with $V=Df,$ where $f:\mathbb{R}^{1+m}\to\mathbb{R}$ is given by
\begin{align*}
f(t,x_{1},\ldots,x_{m}) = \dfrac{b}{2}\|(t,x_{1},\ldots,x_{m})\|^{2} + (t,x_{1},\ldots,x_{m})\cdot(c_{0},c_{1},\ldots,c_{m}) +\kappa,
\end{align*}
for some constant $\kappa.$
\end{remark}

\section{Main Lemma}

In this section, we consider $(t,x)\in\mathbb{R}\times\mathbb{R}^{m},$ $m\geqslant2,$ with coordinate system $(t,x_{1},\ldots,x_{m}).$ Here $\mathbb{R}\times_{w}\mathbb{R}^{m}$ denotes the Riemannian manifold $(\mathbb{R}^{1+m},g)$ where 
\begin{align*}
g=dt^{2}+w^{2}\sum\limits_{i=1}^{m}dx_{i}^{2}.    
\end{align*} 

The following lemma will be useful in the proof of our main result.

\begin{lemma}\label{Lem.1}
Let $\mathbb{R}\times_{w}\mathbb{R}^{m},$ $m\geqslant 2,$ be a warped product and let
$V=(V_{0},V_{1},\ldots,V_{m})$ be a smooth vector field. Then
\begin{align}\label{L1}
Ric = -m\dfrac{w^{\prime\prime}}{w}dt^{2} - \left(\dfrac{w^{\prime\prime}}{w} + (m-1)\left(\dfrac{w^{\prime}}{w}\right)^{2}\right) w^{2}\sum_{i=1}^{m}dx_{i}^{2},
\end{align}
\begin{align}\label{L2}
\nonumber\dfrac{1}{2}\mathcal{L}_{V}g &= \dfrac{\partial V_{0}}{\partial t} dt^{2} + \sum_{i=1}^{m}\left(V_{0}ww^{\prime} + w^{2} \dfrac{\partial V_{i}}{\partial x_{i}}\right) dx_{i}^{2}\\ 
&\ \ \ +\dfrac{1}{2}\sum_{i=1}^{m} \left(\dfrac{\partial V_{0}}{\partial x_{i}} + w^{2} \dfrac{\partial V_{i}}{\partial t} \right)\big(dt\otimes dx_{i} +dx_{i} \otimes dt\big)\\
\nonumber&\ \ \ + \dfrac{w^{2}}{2} \sum_{i\neq j} \dfrac{\partial V_{i}}{\partial x_{j}} \big(dx_{j}\otimes dx_{i} + dx_{i} \otimes dx_{j}\big).    
\end{align}
\end{lemma}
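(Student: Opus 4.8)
The plan is to compute both sides of the Ricci almost soliton equation explicitly in the coordinate frame $(t, x_1, \ldots, x_m)$ and read off the formulas \eqref{L1} and \eqref{L2}; this is a direct calculation. First I would set up the metric components: $g_{00} = 1$, $g_{ij} = w^2 \delta_{ij}$ for $i,j \in \{1,\ldots,m\}$, and $g_{0i} = 0$. The warping function $w = w(t)$ depends only on the base coordinate, which simplifies everything. I would compute the Christoffel symbols: the nonzero ones are $\Gamma^0_{ij} = -w w' \delta_{ij}$ and $\Gamma^i_{0j} = \Gamma^i_{j0} = (w'/w)\delta_{ij}$ (for $i,j$ in the fiber range). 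From these, the Ricci tensor of a warped product $\mathbb{R}\times_w \mathbb{R}^m$ with flat base and flat fiber is standard: $\mathrm{Ric}_{00} = -m\, w''/w$, $\mathrm{Ric}_{ij} = -\big(w w'' + (m-1)(w')^2\big)\delta_{ij} = -\big(w''/w + (m-1)(w'/w)^2\big) w^2 \delta_{ij}$, and the mixed components vanish. This is exactly \eqref{L1}; I could either cite the general warped-product Ricci formula or verify it in two lines using $\mathrm{Ric}_{ab} = \partial_c \Gamma^c_{ab} - \partial_b \Gamma^c_{ac} + \Gamma^c_{cd}\Gamma^d_{ab} - \Gamma^c_{bd}\Gamma^d_{ac}$.

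Next I would compute the Lie derivative $\mathcal{L}_V g$. Using the formula $(\mathcal{L}_V g)_{ab} = V^c \partial_c g_{ab} + g_{cb}\partial_a V^c + g_{ac}\partial_b V^c$, I treat each block. For the $dt^2$ component ($a=b=0$): $g_{00}$ is constant so the first term drops, and $g_{cb}\partial_0 V^c = \partial_0 V^0 = \partial V_0/\partial t$, giving $(\mathcal{L}_V g)_{00} = 2\, \partial V_0/\partial t$. For the pure fiber diagonal component ($a=b=i$): $V^c \partial_c g_{ii} = V^0 \partial_t(w^2) = 2 V_0 w w'$, and the derivative terms give $2 w^2 \partial V_i/\partial x_i$, so $(\mathcal{L}_V g)_{ii} = 2 V_0 w w' + 2 w^2 \partial V_i/\partial x_i$. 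For the mixed component ($a=0$, $b=i$): the $g_{cb}\partial_0 V^c$ term is $g_{ii}\partial_t V^i = w^2 \partial V_i/\partial t$, and the $g_{ac}\partial_i V^c$ term is $g_{00}\partial_i V^0 = \partial V_0/\partial x_i$, so $(\mathcal{L}_V g)_{0i} = \partial V_0/\partial x_i + w^2 \partial V_i/\partial t$. For the off-diagonal fiber component ($a=i$, $b=j$, $i\neq j$): only the derivative terms survive, $g_{ii}\partial_j V^i + g_{jj}\partial_i V^j = w^2(\partial V_i/\partial x_j + \partial V_j/\partial x_i)$. Dividing everything by $2$ and assembling the symmetric tensor as in the statement — being careful with the symmetrization convention $dt\otimes dx_i + dx_i\otimes dt$ which accounts for the factor matching — yields \eqref{L2}.

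The calculation is essentially routine, so there is no deep obstacle; the main thing to get right is bookkeeping of the symmetrization conventions (whether off-diagonal entries carry a factor of $2$ when written as $dx_i \otimes dx_j + dx_j \otimes dx_i$ versus $dx_i dx_j$) and the consistent use of raised versus lowered indices for the vector field $V$, since the components $V_i$ in the statement are the coordinate components $V^i$ of the vector field, not the one-form components $g_{ij}V^j$. I would also double-check the sign conventions in the Ricci tensor formula to make sure \eqref{L1} matches the convention under which a round sphere has positive Ricci curvature, consistent with the sign in the quasi-Einstein identity $\mu = c w^2 + w \Delta_B w + (m-1)|\nabla_B w|^2$ quoted in the introduction. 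Once both sides are in hand, the lemma is proved by simply displaying the two expressions.
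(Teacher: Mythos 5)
Your proposal is correct and follows essentially the same route as the paper: the paper quotes the standard warped-product Ricci formula (Lemma 7.4 of Bishop--O'Neill) for \eqref{L1} and obtains \eqref{L2} by a direct Leibniz-rule computation of $\mathcal{L}_{V}g$ on the coordinate one-forms, which is the same calculation you perform in index notation via $(\mathcal{L}_V g)_{ab}=V^c\partial_c g_{ab}+g_{cb}\partial_a V^c+g_{ac}\partial_b V^c$. All of your component values and the symmetrization bookkeeping agree with the stated formulas.
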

\begin{proof}
We begin by using Lemma $7.4$ in \cite{bis-one}, for the warped product $\mathbb{R}\times_{w}\mathbb{R}^{m},$ so the equation \eqref{L1} holds.

To verify the equation \eqref{L2}, we calculate the Lie derivative of the metric $g$ in the $V$ direction, so  
\begin{align*}
\mathcal{L}_{V}g &= \mathcal{L}_{V}(dt^{2})+V(w^{2})\sum\limits_{i=1}^{m}dx_{i}^{2} + w^{2}\sum\limits_{i=1}^{m}\mathcal{L}_{V}\left(dx_{i}^{2}\right)\\
&= dV(t)\otimes dt + dt\otimes dV(t) + 2V_{0}ww^{\prime }\sum\limits_{i=1}^{m}dx_{i}^{2} \\
&\ \ \ + w^{2}\sum\limits_{i=1}^{m}\left(dV(x_{i})\otimes dx_{i} + dx_{i}\otimes dV(x_{i})\right)\\
&= dV_{0} \otimes dt + dt \otimes dV_{0} + 2V_{0}ww^{\prime }\sum\limits_{i=1}^{m}dx_{i}^{2}\\
&\ \ \ + w^{2}\sum\limits_{i=1}^{m}\left(dV_{i}\otimes dx_{i} + dx_{i}\otimes dV_{i}\right)\\
&= \dfrac{\partial V_{0}}{\partial t}\left(dt \otimes dt + dt \otimes dt\right) + \sum\limits_{i=1}^{m}\dfrac{\partial V_{0}}{\partial x_{i}}\left(dx_{i} \otimes dt + dt \otimes dx_{i}\right)\\
&\ \ \ + 2V_{0}ww^{\prime }\sum\limits_{i=1}^{m}dx_{i}^{2} + w^{2} \sum\limits_{i=1}^{m}\dfrac{\partial V_{i}}{\partial t}\left(dt \otimes dx_{i} + dx_{i} \otimes dt\right)\\
&\ \ \ + w^{2} \sum\limits_{i,j=1}^{m}
\dfrac{\partial V_{i}}{\partial x_{j}}\left(dx_{j} \otimes dx_{i} + dx_{i} \otimes dx_{j}\right).
\end{align*}

This concludes the proof of lemma.
\end{proof}

\section{Proof of Theorem \ref{main1}}

\begin{proof}[\textbf{Proof of Theorem~\ref{main1}}]
We begin using the fundamental equation of Ricci almost soliton, we deduce from Lemma \ref{Lem.1} that 
\begin{itemize}
\item[$\bullet$] \underline{the terms} $dt^{2}:$
\begin{align}\label{eqA}
-m\dfrac{w^{\prime\prime}}{w} + \dfrac{\partial V_{0}}{\partial t} = \lambda;   
\end{align}
\item[$\bullet$]\underline{each term} $dx_{i}^{2}:$ 
\begin{align}\label{eqB}
-\dfrac{w^{\prime\prime}}{w} -(m-1)\left(\dfrac{w^{\prime}}{w}\right)^{2} + V_{0}\dfrac{w^{\prime}}{w} +\dfrac{\partial V_{i}}{\partial x_{i}} = \lambda;    
\end{align}
\item[$\bullet$]\underline{each term} $dt\otimes dx_{i}+dx_{i}\otimes dt:$ 
\begin{align}\label{eqC}
\dfrac{\partial V_{0}}{\partial x_{i}} + w^{2} \dfrac{\partial V_{i}}{\partial t} = 0;   
\end{align}
\item[$\bullet$]\underline{each term} $dx_{i}\otimes dx_{j}+dx_{j}\otimes dx_{i},$ with $i\neq j:$  
\begin{align}\label{eqD}
\dfrac{\partial V_{i}}{\partial x_{j}}=0;    
\end{align}
\end{itemize}
for $i,j=1,\ldots,m.$

So, from equation \eqref{eqD},
\begin{align*}
V_{i}(t,x) = V_{i}(t,x_{i}), \ \ i=1,\ldots,m,    
\end{align*}
and by equation \eqref{eqB}, for $i\neq j,$ we get
\begin{align*}
\dfrac{\partial V_{i}}{\partial x_{i}}(t,x_{i}) = \dfrac{\partial V_{j}}{\partial x_{j}}(t,x_{j}),  
\end{align*}
i.e.
\begin{align*}
\dfrac{\partial V_{i}}{\partial x_{i}}(t,x_{i}) = b(t),
\end{align*}
which implies, by integrating
\begin{align*}
V_{i}(t,x_{i}) = b(t)x_{i} + c_{i}(t),    
\end{align*}
for some smooth functions $b(t),c_{1}(t),\ldots,c_{m}(t).$

Now, for each $i=1,\ldots,m,$ follows from equation \eqref{eqC} that
\begin{align}\label{V0}
\nonumber V_{0}(t,x) &= \int^{x_{i}}\dfrac{\partial V_{0}}{\partial x_{i}}(t,s) ds + a_{i}(t,x_{1},\ldots,x_{i-1},x_{i+1},\ldots,x_{m})\\
\nonumber&= -w(t)^{2}\int^{x_{i}}\dfrac{\partial V_{i}}{\partial t}(t,s) ds + a_{i}(t,x_{1},\ldots,x_{i-1},x_{i+1},\ldots,x_{m})\\
\nonumber&= -w(t)^{2}\int^{x_{i}} \left(b^{\prime}(t)s+c_{i}^{\prime}(t)\right)ds + a_{i}(t,x_{1},\ldots,x_{i-1},x_{i+1},\ldots,x_{m})\\
&= -w(t)^{2} \left(b^{\prime}(t)\dfrac{x_{i}^{2}}{2} + c_{i}^{\prime}(t)x_{i} \right) + a_{i}(t,x_{1},\ldots,x_{i-1},x_{i+1},\ldots,x_{m}),
\end{align}
where $a_{i}(t,x_{1},\ldots,x_{i-1},x_{i+1},\ldots,x_{m})$ are smooth functions.  

Since, for all $i\neq j,$ from \eqref{V0} and denoting $a_{i}(\widehat{x_{i}})=a_{i}(t,x_{1},\ldots,x_{i-1},x_{i+1},\ldots,x_{m})$ we get  
\begin{align*}
a_{i}(\widehat{x_{i}}) + w(t)^{2} \left(b^{\prime}(t)\dfrac{x_{j}^{2}}{2} + c_{j}^{\prime}(t)x_{j} \right) = a_{j}(\widehat{x_{j}}) + w(t)^{2} \left(b^{\prime}(t)\dfrac{x_{i}^{2}}{2} + c_{i}^{\prime}(t)x_{i} \right),  
\end{align*}
so
\begin{align*}
b^{\prime}(t)=c_{i}^{\prime}(t)=c_{j}^{\prime}(t)=0 \ \ \ a_{i}(\widehat{x_{i}}) = a_{j}(\widehat{x_{j}}).   
\end{align*}

Therefore
\begin{align*}
V_{(t,x)} = \left(V_{0}(t),bx_{1}+c_{1},\ldots,bx_{m}+c_{m}\right),   
\end{align*}
for some constants $b,c_{1},\ldots,c_{m}.$

It is clear that \eqref{T3} holds from equation \eqref{eqA}, and \eqref{T2} follows equations \eqref{eqA} and \eqref{eqB} by a straightforward calculation. 

Conversely, if \eqref{T1}, \eqref{T3} and \eqref{T2} hold, we obtain  by Lemma \ref{Lem.1}, 
\begin{align*}
Ric + \dfrac{1}{2}\mathcal{L}_{V}g &= \left(-m\dfrac{w^{\prime\prime}}{w}+V_{0}^{\prime}\right)dt^{2}\\
& \ \ \ \ +\left(-\dfrac{w^{\prime\prime}}{w}-(m-1)\left(\dfrac{w^{\prime}}{w}\right)^{2}+V_{0}\dfrac{w^{\prime}}{w}+b\right)w^{2}\sum\limits_{i=1}^{m}dx_{i}^{2}\\
&=\lambda dt^{2} + \left(-\dfrac{w^{\prime\prime}}{w}-(m-1)\left(\dfrac{w^{\prime}}{w}\right)^{2}-(m-1)\left(\dfrac{w^{\prime}}{w}\right)^{\prime}+V_{0}^{\prime}\right)w^{2}\sum\limits_{i=1}^{m}dx_{i}^{2}\\
&=\lambda dt^{2} + \left(-m\dfrac{w^{\prime\prime}}{w}+V_{0}^{\prime}\right)w^{2}\sum\limits_{i=1}^{m}dx_{i}^{2}\\
&=\lambda dt^{2} + \lambda w^{2}\sum\limits_{i=1}^{m}dx_{i}^{2}=\lambda \left(dt^{2} + w^{2}\sum\limits_{i=1}^{m}dx_{i}^{2}\right) = \lambda g.
\end{align*}

In particular, if $w(t)$ is nonconstant,  from equation \eqref{T1} we have
\begin{align*}
V^{\flat} &= g(V,\cdot) = dt(V)dt + w^{2}\sum\limits_{i=1}^{m}dx_{i}(V)dx_{i}\\
&= V_{0}dt + w^{2}\sum\limits_{i=1}^{m} V_{i}dx_{i} = V_{0}dt + w^{2}\sum\limits_{i=1}^{m}\left(bx_{i}+c_{i}\right)dx_{i},
\end{align*}
which implies, by exterior derivative
\begin{align*}
dV^{\flat} = 2ww^{\prime}\sum\limits_{i=1}^{m}(bx_{i}+c_{i})dt\wedge dx_{i} \neq 0,   
\end{align*}
i.e., $V$ is nongradient smooth vector field.

This complete the proof of the theorem.
\end{proof}

\section{A Nongradient Ricci Almost Soliton}

Now, with the Theorem \ref{main1} in mind, we consider $b\neq0$ and $V_{0}(t)=a,$ for some nonzero constant $a.$ 

Firstly, from equation \eqref{T2}, we obtain
\begin{align*}
\left(\dfrac{w^{\prime}}{w}\right)^{\prime} +\dfrac{a}{m-1}\dfrac{w^{\prime}}{w} = -\dfrac{b}{m-1},    
\end{align*}
which reduces to
\begin{align*}
\left(\exp\!\left(\dfrac{at}{m-1}\right)\dfrac{w^{\prime}}{w}\right)^{\prime} = -\dfrac{b}{m-1}\cdot\exp\!\left(\dfrac{at}{m-1}\right),   
\end{align*}
so, by integrating
\begin{align*}
\dfrac{w^{\prime}}{w} = -\dfrac{b}{a}+c\cdot\exp\!\left(-\dfrac{at}{m-1}\right),    
\end{align*}
for some constant $c\neq0.$

Again, by integrating
\begin{align*}
w(t) = K\cdot \exp\!\left(\!-\frac{(m-1)c}{a}\cdot\exp\!\left(\!-\frac{at}{m-1}\right) -\frac{bt}{a}\right),
\end{align*}
for some constant $K>0.$

Secondly, from equation \eqref{T3}, we get
\begin{align*}
\lambda(t) = -mc^{2}\cdot\left(\exp\!\left(\!-\frac{at}{m-1}\right)-\frac{b}{ac}\right)^{2}
+ \frac{mac}{m-1}\cdot\exp\!\left(\!-\frac{at}{m-1}\right).
\end{align*}

The next proposition provides some properties of the soliton function $\lambda.$  

\begin{proposition}\label{Prop.1}
Let $a,b,c$ be constants and $m\geqslant2.$ 
The smooth function $\lambda:\mathbb{R}\to\mathbb{R}$ given by
\begin{align*}
\lambda(t) = -mc^{2}\cdot\left(\exp\!\left(\!-\frac{at}{m-1}\right)-\frac{b}{ac}\right)^{2}
+ \frac{mac}{m-1}\cdot\exp\!\left(\!-\frac{at}{m-1}\right),
\end{align*}
satisfies
\begin{itemize}
\item[$(i)_{A}$] \underline{For $a<0:$} 
\begin{align*}
\lim\limits_{t\to-\infty} \lambda(t) = -m\frac{b^{2}}{a^{2}}<0 \ \ \ \mbox{and} \ \ \ \lim\limits_{t\to+\infty} \lambda(t) = -\infty.
\end{align*}
\item[$(i)_{B}$] \underline{For $a>0:$} 
\begin{align*}
\lim\limits_{t\to-\infty} \lambda(t) = -\infty \ \ \ \mbox{and} \ \ \ \lim\limits_{t\to+\infty} \lambda(t) = -m\frac{b^{2}}{a^{2}}<0. 
\end{align*}
\item[$(ii)_{A}$] If $\dfrac{a^{2}+2(m-1)b}{ac}>0,$ then $\lambda$ reaches the unique maximum value
\begin{align*}
\lambda(t_{0}) = \frac{m}{4(m-1)^{2}}\big[a^{2} +4(m-1)b\big],
\end{align*}
where $t_{0}=\ln\left(\dfrac{a^{2}+2(m-1)b}{2(m-1)ac}\right)^{-\frac{(m-1)}{a}}.$
\\
\item[$(ii)_{B}$] Otherwise, $\lambda$ is negative and strictly monotone.
\end{itemize}
\end{proposition}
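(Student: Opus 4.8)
The plan is to reduce the entire statement to an elementary analysis of a downward parabola on a half-line, transported through a monotone change of variable. First I would introduce
\begin{align*}
u = u(t) := \exp\!\left(-\frac{at}{m-1}\right),
\end{align*}
which, since $a\neq 0$, is a smooth diffeomorphism of $\mathbb{R}$ onto $(0,+\infty)$ --- strictly decreasing when $a>0$ and strictly increasing when $a<0$. Expanding the square and collecting the terms linear in $u$ recasts the soliton function as
\begin{align*}
\lambda = \Lambda(u) := -mc^{2}u^{2} + \frac{mc\,\big(a^{2}+2(m-1)b\big)}{a(m-1)}\,u - \frac{mb^{2}}{a^{2}},
\end{align*}
a downward parabola in $u$ (leading coefficient $-mc^{2}<0$). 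From this point on, every assertion about $\lambda(t)$ becomes an assertion about $\Lambda$ on $(0,+\infty)$, read back through $u(t)$.

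For $(i)_{A}$ and $(i)_{B}$: directly from the quadratic, $\Lambda(u)\to -mb^{2}/a^{2}$ as $u\to 0^{+}$, while $\Lambda(u)\to -\infty$ as $u\to +\infty$. When $a<0$ one has $u(t)\to 0^{+}$ as $t\to-\infty$ and $u(t)\to+\infty$ as $t\to+\infty$, which gives $(i)_{A}$; when $a>0$ the two ends of the real line are interchanged, which gives $(i)_{B}$.

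For $(ii)_{A}$ and $(ii)_{B}$: the vertex of $\Lambda$ sits at
\begin{align*}
u_{\ast} = \frac{a^{2}+2(m-1)b}{2(m-1)ac},
\end{align*}
so $u_{\ast}$ has the same sign as $\dfrac{a^{2}+2(m-1)b}{ac}$. If this quantity is positive then $u_{\ast}\in(0,+\infty)$, hence $\Lambda$ has a strict global maximum at $u_{\ast}$; pulling back through the bijection $u(\cdot)$, the function $\lambda$ attains a unique maximum at the point $t_{0}$ determined by $u(t_{0})=u_{\ast}$, i.e. $t_{0}=-\tfrac{m-1}{a}\ln u_{\ast}$, which is exactly the $t_{0}$ in the statement. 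The maximum value then comes from $\Lambda(u_{\ast})=\tfrac{m(a^{2}+2(m-1)b)^{2}}{4a^{2}(m-1)^{2}}-\tfrac{mb^{2}}{a^{2}}=\tfrac{m}{4a^{2}(m-1)^{2}}\big[(a^{2}+2(m-1)b)^{2}-4(m-1)^{2}b^{2}\big]$; factoring the bracket as a difference of squares into $a^{2}\big(a^{2}+4(m-1)b\big)$ yields $\lambda(t_{0})=\tfrac{m}{4(m-1)^{2}}\big[a^{2}+4(m-1)b\big]$. If instead $\dfrac{a^{2}+2(m-1)b}{ac}\leq 0$, then $u_{\ast}\leq 0$, so $\Lambda$ is strictly decreasing on all of $(0,+\infty)$; composing this with the strictly monotone $u(t)$ makes $\lambda$ strictly monotone in $t$ (decreasing for $a<0$, increasing for $a>0$), while $\Lambda(u)<\Lambda(0)=-mb^{2}/a^{2}\leq 0$ for every $u>0$ forces $\lambda<0$ everywhere. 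This is $(ii)_{B}$.

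All computations here are elementary once the substitution is in place; the only spots that require a little care are the algebraic collapse of $\Lambda(u_{\ast})$ --- it is the difference-of-squares identity $(a^{2}+2(m-1)b)^{2}-4(m-1)^{2}b^{2}=a^{2}\big(a^{2}+4(m-1)b\big)$ that produces the clean closed form --- and, in the case split, remembering that the maximum obtained in $(ii)_{A}$ need not itself be positive, so there is no conflict with $(ii)_{B}$; the genuine dichotomy is simply whether the vertex $u_{\ast}$ lies in the admissible range $u>0$.
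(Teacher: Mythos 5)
Your proof is correct, and it takes a genuinely cleaner route than the paper's. The paper works directly in the variable $t$: it declares $(i)_{A}$, $(i)_{B}$ and $(ii)_{B}$ to be immediately verified, then computes $\lambda^{\prime}(t)$ and $\lambda^{\prime\prime}(t)$ explicitly, solves $\lambda^{\prime}(t_{0})=0$, checks $\lambda^{\prime\prime}(t_{0})<0$ to obtain a local maximum, and appeals to the asymptotics of $(i)_{A}$/$(i)_{B}$ to promote it to the unique global maximum. Your substitution $u=\exp\left(-\tfrac{at}{m-1}\right)$ replaces all of this with the elementary geometry of the downward parabola $\Lambda(u)=-mc^{2}u^{2}+\tfrac{mc(a^{2}+2(m-1)b)}{a(m-1)}u-\tfrac{mb^{2}}{a^{2}}$ on $(0,+\infty)$: the limits, the dichotomy on the position of the vertex $u_{\ast}$, the uniqueness and globality of the maximum, and the strict monotonicity plus negativity in case $(ii)_{B}$ all drop out at once, with no second derivative needed. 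In particular you supply an actual argument for $(ii)_{B}$, which the paper leaves as an assertion. At bottom the computations coincide --- solving $\lambda^{\prime}(t)=0$ in the paper is exactly locating $u_{\ast}$ --- but your packaging makes the global claims transparent rather than inferred from the sign of $\lambda^{\prime\prime}$ at one point together with the asymptotics. One cosmetic remark: the strict inequality $-m\tfrac{b^{2}}{a^{2}}<0$ in $(i)_{A}$, $(i)_{B}$ tacitly assumes $b\neq0$ (as in the paper's Example); your observation that $\Lambda(0)=-m\tfrac{b^{2}}{a^{2}}\leqslant0$ together with strict decrease still yields $\lambda<0$ in $(ii)_{B}$, so nothing is lost.
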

\begin{proof}
Note that the itens $(i)_{A}$ and $(ii)_{B}$ are immediately verified. 

For the itens $(ii)_{A}$ and $(ii)_{B}$ we must calculate the first and second order derivatives of $\lambda.$ So, by a straightforward calculation we get
\begin{align*}
\lambda^{\prime}(t) =
 & \ \frac{2mac^{2}}{m-1}\cdot\exp\!\left(\!-\frac{2at}{m-1}\right)\\
&-\frac{mc}{(m-1)^{2}}\left(a^{2}+2(m-1)b\right)\cdot\exp\!\left(\!-\frac{at}{m-1}\right)
\end{align*}
and 
\begin{align*}
\lambda^{\prime\prime}(t) = &-\frac{4ma^{2}c^{2}}{(m-1)^{2}}\cdot\exp\!\left(\!-\frac{2at}{m-1}\right)\\
&-\frac{mac}{(m-1)^{3}}\left(ma^{2}+2(m-1)b\right)\cdot\exp\!\left(\!-\frac{at}{m-1}\right). 
\end{align*}
Hence, if $\frac{a^{2}+2(m-1)b}{ac}>0,$ then $\lambda$ has a unique maximum point in 
\begin{align*}
t_{0}=\ln\left(\frac{a^{2}+2(m-1)b}{2(m-1)ac}\right)^{-\frac{(m-1)}{a}}, 
\end{align*}
with maximum value 
\begin{align}\label{val}
\lambda(t_{0}) = \frac{m}{4(m-1)^{2}}\left(a^{2} +4(m-1)b\right),
\end{align}
since holds either $(i)_{A}$ or $(i)_{B},$
and
\begin{align*}
\lambda^{\prime}(t_{0}) = 0 \ \ \ \mbox{and} \ \ \ \lambda^{\prime\prime}(t_{0}) = -\frac{m^{2}}{(m-1)^{4}}c^{2}\left(a^{2}+2(m-1)b\right)^{2}<0.
\end{align*}
\end{proof}

\begin{remark}
Note that, if holds \eqref{val}, then we can chosen the signal of $a^{2}+4(m-1)b$ even with $\dfrac{a^{2}+2(m-1)b}{ac}>0.$ Hence, we obtain a nongradient (expanding or indefinite) Ricci almost soliton $(\mathbb{R}\times_{w}\mathbb{R}^{m},V,\lambda).$    
\end{remark}

Finally, we present our example of nongradient (expanding or indefinite) Ricci almost soliton.

\begin{example}[A nongrandient Ricci almost soliton]\label{main2}
Let $\mathbb{R}\times_{w}\mathbb{R}^{m},$ $m\geqslant 2,$ be a warped product, where
\begin{align*}
w(t) = K\cdot \exp\!\left(\!-\frac{(m-1)c}{a}\cdot\exp\!\left(\!-\frac{at}{m-1}\right) -\frac{bt}{a}\right),
\end{align*}
for some nonzero constants $a,b,c$ and $K>0.$ Consider the nongradient smooth vector field
\begin{align*}
V_{(t,x)}=\left(a,bx_{1}+c_{1},\ldots,bx_{m}+c_{m}\right),\end{align*}
for some constants $c_{1},\ldots,c_{m}$ and the smooth function
\begin{align*}
\lambda(t) = -mc^{2}\cdot\left(\exp\!\left(\!-\frac{at}{m-1}\right)-\frac{b}{ac}\right)^{2}
+ \frac{mac}{m-1}\cdot\exp\!\left(\!-\frac{at}{m-1}\right).
\end{align*}
Then, by Theorem \ref{main1} and Proposition \ref{Prop.1}, we conclude that $(\mathbb{R}\times_{w}\mathbb{R}^{m},V,\lambda)$ is a nongradient (expanding or indefinite) Ricci almost soliton.
\end{example}

\begin{remark}
Recall that the gradienticity is a Riemannian metric condition. In fact, for the standard metric of $\mathbb{R}^{1+m},$
\begin{align*}
g_{\circ} = dt^{2} + \sum\limits_{i=1}^{m}dx_{i}^{2},   
\end{align*}
the vector field $V_{(t,x)}=(a,bx_{1}+c_{1},\ldots,bx_{m}+c_{m})$ is the gradient of smooth function $u:\mathbb{R}^{1+m}\to\mathbb{R}$ given by
\begin{align*}
u(t,x_{1},\ldots,x_{m}) = at+\dfrac{b}{2}\|(x_{1},\ldots,x_{m})\|^{2} + (x_{1},\ldots,x_{m})\cdot(c_{1},\ldots,c_{m}) + \kappa,    
\end{align*}
for some constant $\kappa.$
\end{remark}

\vspace{0.3cm}
\textbf{Acknowledgements:} The author would like to express his gratitude to the anonymous reviewer for his careful reading and for his corrections and constructive suggestions to the text. Thanks to your work, we were able to improve this article.

\vspace{0.3cm}
\textbf{Data availability:} No data was used for the research described in the article.


\begin{thebibliography}{99}

\bibitem{BaDa} P. Baird, L. Danielo, Three-dimensional Ricci solitons which project to surfaces, Journal für die reine und angewandte Mathematik, 608 (2007) 65--91.

\bibitem{BBR} A. Barros, R. Batista,  E. Ribeiro Jr, Bounds on volume growth of geodesic balls for Einstein warped products, Proc. Amer. Math. Soc. 143 (2015) 4415--4422.	

\bibitem{besse} A. L. Besse, Einstein manifolds, Ergebnisse der Mathematik und ihrer Grenzgebiete (3) [Results in Mathematics and Related Areas (3)], vol. 10, Springer-Verlag, Berlin, 1987.

\bibitem{bis-one} R. L. Bishop, B. O'Neill, Manifolds of negative curvature, Trans. Am. Math. Soc. 145 (1969) 1--49.

\bibitem{B-Tenen} V. Borges, K. Tenenblat, Ricci almost solitons on semi-Riemannian warped products, Mathematische Nachrichten. 295 (2022) 22--43. 


\bibitem{calvi.etal} E. Calviño-Louzao, M. Fernández-López, E. García-Río, and R. Vázquez-Lorenzo, Homogeneous Ricci almost solitons, Israel J. Math. 220 (2017), no. 2, 531-–546. 


\bibitem{case2} J. Case, The nonexistence of quasi-Einstein metrics, Pacific J. Math. 248 (2) (2010) 277--284.	

\bibitem{Case} J. Case, Y. Shu and G. Wei, Rigidity of quasi-Einstein metrics, Differential Geom. Appl. 29 (2011) 93--100.

\bibitem{Catino1} G. Catino, L. Cremaschi, Z. Djadli, C. Mantegazza, L. Mazzieri, The Ricci-Bourguignon flow, Pacific J. Math. 287 (2) (2017) 337–370.

\bibitem{Catino2} G. Catino, L. Mazzieri, Gradient Einstein solitons, Nonlinear Analysis 132 (2016) 66–94.



\bibitem{RSWP} F. E. S. Feitosa, A.A. Freitas Filho and J.N.V. Gomes, On the construction of gradient Ricci soliton warped product, Nonlinear Analysis. 161 (2017) 30--43.

\bibitem{ARSWP} F. E. S. Feitosa, A.A. Freitas Filho, J.N.V. Gomes, R.S. Pina, Gradient Ricci almost soliton warped product, Journal of Geometry and Physics 143 (2019) 22--32.



\bibitem{Gas-Kro} A. Gastel, M. Kronz, A family of expanding Ricci Solitons, Progr. Nonlinear Differential Equations Appl. 59 (2004) 81-93.

\bibitem{Ivey} T. Ivey, New examples of complete Ricci solitons. Proc. Amer. Math. Soc. 122 (1994) 241-245.

\bibitem{HamiltonRF} R. S. Hamilton, The Formation of Singularities in the Ricci Flow, in: Surveys in Differential Geometry, International Press, Cambridge, MA, 1995.

\bibitem{hepeterwylie} C. He, P. Petersen, W. Wylie, On the classification of warped product Einstein metrics, Comm. Anal. Geom. 20 (2) (2012) 271-311.

\bibitem{kim} D.-S. Kim,  Y. H. Kim, Compact Einstein warped product spaces with nonpositive scalar curvature, Proc. Amer. Math. Soc. 131 (8) (2003) 2573-2576.

\bibitem{Pigola} S. Pigola, M. Rigoli, M. Rimoldi and A. Setti, Ricci Almost solitons, Ann. S. Norm. Super. Pisa Cl. Sci. (5) X (2011) 757--799.

\bibitem{romildo} R. S. Pina, M. L. Sousa, Gradient Ricci Solitons with Structure of Warped Product. Results in Mathematics, 71(3-4), (2016) 825--840.  

\end{thebibliography}
\end{document}